\newtheorem{lemma}{Lemma}
\newtheorem{corollary}{Corollary}
\newtheorem{theorem}{Theorem}
\newtheorem{non-theorem}{Non-Theorem}
\theoremstyle{remark}
\newtheorem{definition}{Definition}
\newtheorem{remark}{Remark}
\newcommand\bbC{\mathbb C}
\newcommand\bbP{\mathbb P}
\newcommand\bbR{\mathbb R}
\newcommand\bbS{\mathbb S}
\title{Eigenvalue resolution of self-adjoint matrices}
\author{Xuwen Zhu}
\address{Department of Mathematics, Stanford University}
\email{xuwenzhu@stanford.edu}
\date{\today}
\begin{document}

\begin{abstract}
Resolution of a compact group action in the sense described by Albin
and Melrose is applied to the conjugation action by the unitary
group on self-adjoint matrices. It is shown that the eigenvalues are
smooth on the resolved space and that the trivial bundle smoothly
decomposes into the direct sum of global one-dimensional
eigenspaces.
\end{abstract}

\maketitle

For a general compact Lie group $G$ acting on a smooth compact manifold with corners $M$, Albin and Melrose~\cite{MR2560748} showed that there is a canonical full resolution such that the group action lifts to the blown-up space $Y(M)$ to have a unique isotropy type. Under this conditon the result of Borel~\cite{borel2006compactifications} applies to show that the orbit space $G\backslash Y(M)$ is smooth. 

In this paper, we give an explicit construction of the resolution of the action of the unitary group on the space of self-adjoint matrices
\begin{equation*}
 S=S(n)=\{X \in
M_n(\mathbb{C})|X^*=X\}
\end{equation*}
with the unitary group $U(n)$ acting by conjugation: 
$$u \in
U(n), X \in S, u\cdot X:=uXu^{-1}.$$ 
The orbit of an element $X\in S$, denoted by $U(n)\cdot X$,
consists of the matrices with the same eigenvalues including multiplicities. For a matrix $X
\in S$ with $m$ distinct eigenvalues $\{\lambda_j\}_{j=1}^m$ with multiplicities $i_k, k = 1,2, ..,m$,
the isotropy group of $X$ is conjugate to a direct sum of smaller unitary groups:
\begin{equation*}
U(n)^X(:=\{u\in U(n)|u\cdot X=X\})\cong \oplus_{k=1}^{m}U(i_k).
\end{equation*}
The isotropy types are therefore parametrized by the partition
of $n$ into integers. Note here that the partition contains information about ordering, for example, the two partitions of $3$, $\{i_1=1, i_2=2\}$ and $\{i_1=2, i_2=1\}$, are not the same type. 

For $n>1$, the eigenvalues are not smooth functions on $S$, but are singular where the multiplicities change. Consider the trivial bundle over $S$,
$M:=S\times \bbC^n,$ the fiber of which can be decomposed into $n$ eigenspaces of the self-adjoint matrix at the base point. This decomposition is not unique at matrices with multiple eigenvalues, and the eigenspaces are not smooth at these base points. 
We
will show that, by doing iterative blow-ups, the singularities are resolved and the eigenvalues become smooth functions on the resolved space. Moreover, by doing a ``full'' blow up, the eigenspaces also become smooth.

Recall the lemma of group action resolution in \cite{MR2560748}:
\begin{lemma}[\cite{MR2560748}]\label{resolution}
A compact manifold (with corners), M, with a smooth, boundary
intersection free, action by a compact Lie group, G, has a canonical
full resolution, $Y(M)$, obtained by iterative blow-up of minimal
isotropy types.
\end{lemma}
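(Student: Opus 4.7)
The plan is to follow the stratification-and-blow-up strategy that is standard in equivariant differential topology. First I would invoke the slice theorem for compact group actions: at any $x \in M$ the action is locally modeled on the twisted product $G \times_{G_x} V$, where $V$ is a linear slice on which the isotropy $G_x$ acts. From this local model it follows that the set $M_{(H)}$ of points whose isotropy is conjugate to $H$ is a smooth $G$-invariant (immersed) submanifold, and that the partial order $(H) \leq (K) \iff H$ is subconjugate to $K$ makes the minimal types (maximal isotropy groups) correspond to closed strata.

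Next I would identify the minimal isotropy types, which form a finite collection of pairwise disjoint closed $G$-invariant submanifolds $\{N_\alpha\}$ of $M$. The boundary intersection free hypothesis guarantees that each $N_\alpha$ meets every boundary face of $M$ transversally, so each is a p-submanifold in the manifold-with-corners sense and the radial blow-ups $[M;N_\alpha]$ are well-defined. Performing them simultaneously (they are disjoint) yields a new manifold with corners $M_1$ together with a blow-down map $\beta_1\colon M_1 \to M$.

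The crucial step is to verify that the $G$-action lifts smoothly to $M_1$, that the isotropy types now appearing lie strictly above the ones removed in the partial order, and that $M_1$ is again boundary intersection free. In the slice model $G \times_{G_x} V$ near a minimal stratum, the radial blow-up replaces the $G_x$-fixed linear subspace by its spherical normal bundle; since $G_x$ acts without fixed vectors on the sphere of the normal complement, the minimum type disappears and every nearby stabilizer is a proper subgroup of $G_x$. The new boundary face is the normal sphere bundle, hence $G$-invariant, and boundary intersection freeness is preserved because sphere bundles of p-submanifolds are themselves p-submanifolds.

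Finally I would iterate: the minimal types of $M_1$ sit strictly higher in the partial order than those of $M$, so after finitely many steps one reaches a space $Y(M)$ with a single isotropy type. Canonicity follows from the disjointness of minimal strata at each stage (the order of blow-ups is immaterial) and from the intrinsic nature of the minimal-type decomposition. The main obstacle I anticipate is the careful verification that the radial blow-up of a p-submanifold preserves the boundary intersection free condition \emph{and} carries the group action smoothly to the blown-up space; this is a local computation in an equivariant tubular neighborhood, but it has to be performed with the manifold-with-corners blow-up formalism of Melrose used precisely, in particular taking care that intersections of new front faces with old boundary hypersurfaces remain transversal and $G$-invariant.
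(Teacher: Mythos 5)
This lemma is not proved in the paper at all: it is quoted verbatim from Albin and Melrose \cite{MR2560748}, and the paper uses it as an input. So there is no proof of the author's to compare against; what you have written is an outline of the argument in the cited reference.

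As such, your sketch does follow the broad outline of the Albin--Melrose construction: slice theorem, stratification by conjugacy class of isotropy group, partial order by subconjugacy, iterative radial blow-up of the closed minimal strata, and lifting the $G$-action at each stage. A few cautions are in order, though, since you are effectively reproving a result from another paper. First, ``boundary intersection free'' in \cite{MR2560748} is a condition on how $G$ permutes the boundary hypersurfaces of $M$ (roughly, that $gH$ and $H$ are either equal or disjoint for each boundary hypersurface $H$), not a transversality condition on the strata; the transversality of $N_\alpha$ to boundary faces and its being a p-submanifold must be deduced from this together with the slice theorem, so your attribution of the p-submanifold property directly to the hypothesis skips a step. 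Second, the conclusion of the theorem is not merely ``a space with a single isotropy type'': the ``full resolution'' carries an iterated equivariant fibration structure on its boundary faces recording the blown-down strata, and verifying that this structure is produced and is independent of the choices made is a substantial part of the original proof that your sketch doesn't address. Third, the finiteness of the iteration requires the finiteness of the set of isotropy types, which for a compact $G$ acting on a compact $M$ holds but should be stated. You correctly flag at the end that the equivariant local blow-up computation in the manifold-with-corners category is where the real work lies; that, together with the fibration structure on the new boundary faces, is most of the content of \cite{MR2560748}, so what you have is a faithful high-level plan rather than a proof.
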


In this paper we will discuss two kinds of blow ups, namely radial and projective blow up, which
give different results; projective blow up of a hypersurface is trivial but radial blow up produces a new boundary.
A resolution of $S$ involves the choice of blow up and which centers to blow up. In this paper, we will discuss
three kinds of resolutions:
\begin{definition}\label{def}
We define the following three resolutions of $S$:
\begin{enumerate}
\item Radial resolution $\hat S_{r}$: radial blow up of all singular stratums $\{\exists i \neq j, \ \lambda_{i}=\lambda_{j}\}$ in an order
compatible with inclusion of the conjugation class of the isotropy
group;
\item Projective resolution $\hat S_{p}$: projective blow up of all singular stratums in the same order as radial resolution;
\item Small resolution $\hat S_{s}$: radial blow up of a smaller set of centers $\cup_{1\leq i < j \leq n}\{\lambda_{i}=\lambda_{i+1}=\dots=\lambda_{j}\}$ with the order determined by complete inclusion. 
\end{enumerate}
\end{definition}

As pointed out in ~\cite{MR2560748}, projective blow up usually requires an extra step of reflection in the iterative scheme in order to obtain smoothness. We will show that, the radial resolution yields that the
trivial bundle $M$ decomposes into the direct sum of $n$ 1-dimensional
eigenspaces. By contrast, after projective resolution or small resolution, the
eigenvalues are smooth on the resolved space, and locally is a smooth decomposition into simple eigenspaces, but the trivial bundle doesn't split into global line bundles. 

\begin{remark}
In theory there is a fourth resolution by doing projective blow up of the smaller set of centers introduced in $\hat S_{s}$. This resolves eigenvalues but does not globally resolve eigenbundles, for the same reason as $\hat S_{s}$. Therefore for simplicity we do not include this resolution in our discussion below.
\end{remark}

To describe the different outcome of the three resolutions above, we recall the resolution in the sense of Albin and Melrose.

\begin{definition}[Eigenresolution]

By an eigenresolution of $S$, we mean a manifold with corners $\hat
S$, with a surjective smooth map $\beta:\hat S \rightarrow S$ such
that the self-adjoint matrices have a smooth (local) diagonalization
when lifted to $\hat S$. Eigenvalues then lift to $n$ smooth functions $f_{i} $ on $\hat S$, i.e.
for any $X\in \hat S$, $\beta(X)$ has eigenvalues $\{f_{i}(X)\}_{i=1}^{n}$.
\end{definition}
Note that in the definition we only require the diagonalization to exist locally. 
To encompass the information of global decomposition of eigenvectors, we introduce the full resolution below. 

\begin{definition}[Full eigenresolution]
A full eigenresolution is an eigenresolution with global
eigenbundles. The eigenvalues lift to $n$ smooth functions $f_i$ on $\hat
S$, and the trivial n-dimensional complex vector bundle on $\hat S$
is decomposed into $n$ smooth line bundles: 
$$\hat S \times\mathbb{C}^n=\bigoplus_{i=1}^n E_i$$ 
such that
\begin{equation*}
\beta(X) v_i=f_i(X)v_i, \forall \ v_i \in E_i(X),\forall \  X \in \hat S.
\end{equation*}
\end{definition}

We use the blow-up constructions introduced by Melrose in the book ~\cite[Chapter 5]{resolution} and show that we can obtain resolutions in this way and, in particular, a full resolution if we use radial blow-up.

\begin{theorem}\label{blowup}
 The three types of resolutions given in Definition~\ref{def}, namely, $\hat S_{r}, \hat S_{p}$, and $\hat S_{s}$, each yields an eigenresolution. Only the radial resolution $\hat S_{r}$
gives a full eigenresolution.
\end{theorem}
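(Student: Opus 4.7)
The strategy is to work locally by combining the $U(n)$-equivariant slice theorem with an induction on the depth of the isotropy-type stratification, starting from the deepest (minimal) stratum. Near any $X_0 \in S$ with distinct eigenvalues $\lambda_1, \dots, \lambda_m$ of multiplicities $i_1, \dots, i_m$, a slice to the $U(n)$-orbit through $X_0$ is naturally identified with $\prod_{k=1}^{m} S(i_k)$, sending $X_0$ to the tuple of scalar matrices $(\lambda_1 I_{i_1}, \dots, \lambda_m I_{i_m})$. All three resolutions of Definition~\ref{def} blow up centers in an order compatible with the inclusion partial order on isotropy types, and the slice decomposition is respected by each, so the problem reduces to understanding a neighborhood of the origin in $S(i)$ for each $i \leq n$ and how the blow-up of $\{0\}$ affects eigenvalues and eigenvectors.

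For eigenvalue smoothness, parametrize a neighborhood of $0 \in S(i)$ by polar coordinates $X = t \tilde X$ with $\tilde X$ lying on the unit sphere $\SS(S(i))$. The eigenvalues of $X$ are the $i$ functions $t \mu_j(\tilde X)$, where $\mu_j(\tilde X)$ are the eigenvalues of $\tilde X$, and these are smooth in $(t,\tilde X)$ away from lower-dimensional collision strata on $\SS(S(i))$; by induction on dimension, iterated blow-ups within $\SS(S(i))$ eventually resolve these. The same polar picture works for all three constructions, with the following differences: the radial blow-up uses $t \geq 0$ with $\tilde X$ on the full sphere; the projective blow-up uses a chart with $t \in \RR$ subject to the identification $(t,\tilde X)\sim(-t,-\tilde X)$, under which the product $t\mu_j(\tilde X)$ remains invariant as an unordered set of values; the small blow-up uses the same coordinates but only blows up the nested family of consecutive-collision strata, which is enough once one tracks ordered eigenvalues. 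This establishes that each of $\hat S_r$, $\hat S_p$, and $\hat S_s$ is an eigenresolution.

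For the global eigenbundle, I would analyze the spectral projections locally and ask whether they patch across the exceptional faces. On $\hat S_r$, the sphere direction $\tilde X$ is a genuine element of $\SS(S(i))$, so the simple eigenvalues of $\tilde X$ (after the inductive sub-resolution) admit a continuous ordering on each exceptional face, and the corresponding rank-one spectral projections $P_j$ glue to smooth projections on the pulled-back trivial bundle, yielding $\hat S_r \times \CC^n = \bigoplus_j E_j$. By contrast, the antipodal identification intrinsic to the projective blow-up sends the $j$-th eigenvector line to the $(i+1-j)$-th one, and I would make this obstruction explicit in the $n = 2$ case via the family $X(\theta) = \cos\theta\,\sigma_1 + \sin\theta\,\sigma_3$ whose eigenvector lines exhibit a nontrivial $\ZZ/2$ monodromy along a loop descending to a cycle on $\hat S_p$. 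For $\hat S_s$, although ordered eigenvalues are globally consistent, the smaller set of centers leaves certain deeper intersections (for instance the partition $\{2,2\}$ locus when $n=4$) resolved only by the intersection of previously created faces rather than by a genuine sphere bundle, and I would exhibit an analogous monodromy loop encircling such a face to rule out a global splitting.

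The main obstacle lies in the global bundle statement: one must verify that the natural $\ZZ/2$ sign cocycle obstructing a continuous choice of unit eigenvector trivializes on the sphere-bundle faces of $\hat S_r$, while simultaneously producing explicit nontrivial cycles carrying this cocycle on $\hat S_p$ and $\hat S_s$. The iterated polar-coordinate argument for local eigenvalue smoothness, while requiring careful bookkeeping across nested blow-ups, follows the standard pattern of the blow-up calculus of \cite{resolution}, so the bulk of the work is isolating and checking orientability of the radial sphere bundle and exhibiting the monodromy obstructions for the other two.
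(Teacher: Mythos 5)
Your proposal follows the same broad inductive strategy as the paper -- a local product structure near each non-scalar matrix (so one can recurse to $S(i_k)$ for the blocks), a base case at the scalar matrices where the trace-free slice is blown up, and explicit monodromy loops to kill the global-splitting claim for $\hat S_p$ and $\hat S_s$. Where you diverge is in how you obtain the local product: you invoke the $U(n)$-equivariant slice theorem directly (the slice at $X_0$ with multiplicities $(i_1,\dots,i_m)$ being $\prod_k S(i_k)$, the centralizer slice), whereas the paper constructs the decomposition by hand via the Riesz projection $P_X = -\frac{1}{2\pi i}\oint_\gamma (X-sI)^{-1}\,ds$ over a spectral gap, and then packages the result as a bundle map into $P_k\oplus P_{n-k}$ over the Grassmannian (Lemmas~\ref{decom} and~\ref{product}). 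Your route is cleaner to state; the paper's is more explicit and makes the smoothness and $U(n)$-equivariance of the splitting manifest, which is what actually drives the ``global splitting over the radial resolution'' step. Either can work, but if you take the slice-theorem shortcut you still owe a verification analogous to Lemma~\ref{order} that the prescribed blow-up orders (inclusion order for $\hat S_r,\hat S_p$; the strictly nested order for $\hat S_s$) are compatible with restriction to a slice, so that the blow-ups genuinely restrict to the inductive blow-ups of the $S(i_k)$ factors; you assert this in one clause but it is not automatic, particularly for $\hat S_s$ whose centers are not $U(n)$-invariant unions of whole isotropy strata.

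The genuine gap is the $\hat S_s$ monodromy obstruction, which you flag as remaining work (``I would exhibit an analogous monodromy loop'') rather than supply. This is the most delicate part of the theorem: one must produce a closed loop in $\hat S_s$ along which the eigenspace decomposition returns permuted, and the loop has to thread through the strata that $\hat S_s$ \emph{does} blow up without contracting. The paper does this with an explicit $4\times 4$ family $X(t)=U(t)\Lambda(t)U(t)^{-1}$ in which $\Lambda(t)$ crosses both $\{\lambda_1=\lambda_2\}$ and $\{\lambda_3=\lambda_4\}$ while $U(t)$ rotates $(\vec e_1,\vec e_2)\mapsto(\vec e_3,\vec e_4)$, so the lift to $\hat S_s$ is a closed curve exhibiting the swap; your ``$\{2,2\}$ locus when $n=4$'' is the right locus but the actual construction is not routine and should be written out. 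Separately, your argument that the $\ZZ/2$ cocycle ``trivializes on the sphere-bundle faces of $\hat S_r$'' is the right intuition but is asserted rather than proved; the paper's mechanism is that the radial exceptional faces are boundary hypersurfaces (so cannot be encircled) and the local splitting is $U(n)$-equivariant, hence extends coherently over the full Grassmannian orbit -- you should make this precise rather than appeal to an orientability heuristic. Finally, note that your ``unordered set of values'' observation for $\hat S_p$ is exactly why the eigenresolution definition only asks for \emph{local} diagonalization; it would be worth stating that the individual eigenvalue functions $t\mu_j(\tilde X)$ are smooth only locally on $\hat S_p$, matching the paper's definition.
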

\begin{remark}
In particular, the blow down map $\beta: \hat S \rightarrow S$ is a diffeomorphism between the interior of $\hat S$ and the open dense subset of S consisting of the matrices with $n$-distinct eigenvalues.
\end{remark}

Related to the problem of resolving eigenvalues is the problem of desingularisation of polynomial roots. In~\cite{kurdyka2008hyperbolic}, generalizing Rellich's result on one-dimensional analytical families~\cite{rellich1937storungstheorie}, the perturbation theory of hyperbolic polynomials is discussed using Hironaka's resolution theory. It is applied to perturbation theory of normal operators and resonances, see for example~\cite{rainer2013perturbation} and~\cite{rauch1980perturbation}.

The idea of resolution has been used in many geometric problems. The abstract notion of a resolution structure on a manifold with corners is discussed in~\cite{baum1985cohomologie}. In~\cite{davis1978smooth} it is shown that for a general action the induced action on the set of boundary hypersurfaces can be appropriately resolved. The canonical resolution is presented in~\cite{duistermaat2000lie}, and the induced resolution of the orbit space is considered in~\cite{hassell1995analytic}. In~\cite{MR2560748}, an iterative procedure is shown to capture the simultaneous resolution of all isotropy types in a ``resolution structure'' consisting of equivariant iterated fibrations of the boundary faces, which is the procedure we will use in this paper.

\textbf{Acknowledgement.} I would like to thank Richard Melrose for suggesting this project and all the helpful discussions.  I am also grateful to the anonymous referees for their careful reading and comments, and the suggestion of the small resolution discussed in this paper.

\section{Proof of Theorem~\ref{blowup}}
The proof of Theorem~\ref{blowup} proceeds through induction on the dimension. We begin by discussing the first example which is the $2\times 2$ matrices. 
\begin{lemma}[2 $\times$ 2 case]\label{2dim}
For the $2\times 2$ self-adjoint matrices $S(2)$, the eigenvalues and eigenvectors are smooth except at multiples of the identity. After radial resolution, the singularities are resolved and the trivial 2-dim bundle splits into the direct sum of two line
bundles. The projective resolution also gives smooth eigenvalues, but
does not give two global line bundles.
\end{lemma}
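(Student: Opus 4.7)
The plan is to reduce to the traceless case. Any $X\in S(2)$ may be written $X = \alpha I + Y$ with $\alpha = \operatorname{tr}(X)/2$ and $Y$ traceless self-adjoint, parametrized by $(s,b,c)\in\mathbb{R}^3$ via
$$
Y = \begin{pmatrix} s & b+ic \\ b-ic & -s \end{pmatrix}.
$$
A direct calculation gives $\lambda_\pm(X) = \alpha \pm r$ with $r = \sqrt{s^2+b^2+c^2}$, so the singular locus is the line $\{r=0\}$ of scalar matrices, on which the isotropy jumps from the maximal torus to all of $U(2)$. There is a single stratum to resolve, of real codimension three; radial blow up replaces it by a copy of $S^2$, while projective blow up replaces it by $\mathbb{RP}^2$.

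For the radial resolution I would introduce polar coordinates $(r,\omega)\in[0,\infty)\times S^2$ with $\omega=(\tilde s,\tilde b,\tilde c)$, so that $\alpha\pm r$ are visibly smooth. For the $\lambda_+$-eigenspace I would exhibit two explicit candidates
$$
v_1 = \begin{pmatrix}\tilde b+i\tilde c\\ 1-\tilde s\end{pmatrix}, \qquad v_2 = \begin{pmatrix}1+\tilde s\\ \tilde b-i\tilde c\end{pmatrix}.
$$
A short computation using $\tilde s^2+\tilde b^2+\tilde c^2=1$ shows that each satisfies the eigenvector equation on all of $\hat S_r$; they are parallel wherever both are nonzero, and vanish only at $\tilde s = 1$ and $\tilde s = -1$ respectively. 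Since these two points are disjoint, $\{v_1,v_2\}$ is a pair of local frames that patch to a smooth global complex line bundle $E_+$; taking orthogonal complement yields $E_-$, and the splitting $\hat S_r\times\mathbb{C}^2 = E_+\oplus E_-$ follows.

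For the projective case I would parametrize the blow up by $(\rho,\omega)\in\mathbb{R}\times S^2$ modulo the identification $(\rho,\omega)\sim(-\rho,-\omega)$, which recovers the same matrix $Y$. Under this identification $\rho\mapsto -\rho$, so $\lambda_+$ and $\lambda_-$ are not individually well defined on $\mathbb{RP}^2$; nevertheless the unordered pair $\{\alpha\pm\rho\}$ is invariant and descends to two smooth functions locally, yielding an eigenresolution. On the other hand, $v_1(\omega)$ is a $+\rho$-eigenvector of $Y$ while $v_1(-\omega)$ at the identified point is a $-\rho$-eigenvector of the same $Y$; the antipodal identification therefore swaps the two eigenspace labels, and no continuous global $E_+$ can exist on the exceptional $\mathbb{RP}^2$.

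The main obstacle is not the local smoothness, which is routine once polar coordinates are chosen, but the global patching on the exceptional face. One must carefully distinguish the two topologies ($S^2$ versus $\mathbb{RP}^2$), verify by direct computation that $v_1$ and $v_2$ cover $S^2$ by complementary non-vanishing sections, and then show in the projective case that the antipodal action swaps the two eigenspaces, which is exactly the reflection obstruction alluded to earlier. This lemma will serve as the base case of the induction used to prove Theorem~\ref{blowup}.
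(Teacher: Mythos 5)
Your proof follows essentially the same route as the paper: reduce to the traceless slice $S_0\cong\mathbb{R}^3$, pass to polar coordinates so the radius $r$ becomes a smooth boundary defining function and $\lambda_\pm=\alpha\pm r$ are smooth, and observe that the antipodal identification $(\rho,\omega)\sim(-\rho,-\omega)$ defining the projective blow-up swaps the two eigenspaces, which is precisely the obstruction the paper detects by following a noncontractible loop in $\mathbb{RP}^2$ along $\{c=d=0\}$. The one point where you are more explicit than the paper is the radial eigenbundle: your two frames $v_1,v_2$ vanish only at the north and south poles of the front-face $S^2$ respectively and patch to a global line bundle $E_+$, whereas the paper records a single formula $v_\pm$ (equation~\eqref{eigenvecor}) whose lift still vanishes along the ray above $(\tilde a_{11},\tilde c,\tilde d)=(1,0,0)$, leaving the patching step implicit.
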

\begin{remark}
Note in the 2 by 2 case, the radial resolution $\hat S_{r}$ and the small resolution $\hat S_{s}$ are the same.
\end{remark}
\begin{proof}
In this case 
$$S=S(2)=\{\left( \begin{array}{cc} a_{11} & z_{12}\\
\bar z_{12} & a_{22}
\end{array}
\right)|a_{ii}\in \mathbb{R},z_{12} \in \mathbb{C}\} \cong
\mathbb{R}^4.$$ The space S is isomorphic to the product of $\mathbb{R}$ and
the trace-free subspace
\begin{equation}
 S_0=\{\left( \begin{array}{cc} a_{11} & z_{12}\\
\bar z_{12} & a_{22}
\end{array}
\right)|a_{11}+a_{22}=0\},
\end{equation}
i.e. there is a bijective linear map:
\begin{equation}\label{equ:S}
  \begin{array}{lccc}
  \phi: & S &\rightarrow& S_0 \times \mathbb{R}\\
   & A=\left( \begin{array}{cc} a_{11} & z_{12}\\
   \bar z_{12} & a_{22}
\end{array}
\right) & \mapsto& (A_0:=A-\frac{1}{2}(a_{11}+a_{22})I, \frac{1}{2}(a_{11}+a_{22})).
 \end{array}
\end{equation}

The eigenvalues $\lambda_i$ and eigenvectors $v_i$ of $A$ are related to those of $A_0$ by 
$\lambda_i(A)=\lambda_i(A_0)+\frac{1}{2}tr(A)$, $v_i(A)=v_i(A_0),i=1,2$.
Therefore, we can restrict the discussion of resolution to the subspace $S_0$,
since the smoothness of eigenvalues and eigenvectors on the resolution of $S$ follows.

Let $z_{12}=c+di$. The space $S_0$ can be identified with
$\mathbb{R}^3=\{(a_{11},c,d)\}$. The eigenvalues of this matrix are: 
\begin{equation}
 \lambda_{\pm}=\pm \sqrt{a_{11}^2+c^2+d^2}.
\end{equation}
Hence the only singularity of the eigenvalues on
$S_0$ is at the point $a_{11}=c=d=0$ which represents the zero matrix.

Based on the resolution formula in \cite{resolution}, the
radial blow up can be realized as
\begin{equation}
\hat S_{0,r}=[S_0,\{0\}]=S^+N\{0\}\sqcup (S_0\setminus \{0\})\simeq \bbS^2
\times [0,\infty)_+
\end{equation}
where the front face $S^+N\{0\}\simeq \bbS^2$. Here the radial variable is
$$r=\sqrt{a_{11}^2+c^2+d^2}.$$ 
The blow-down map is
\begin{equation}
 \beta: [S_0,\{0\}]\rightarrow S_0, (r,\theta)\mapsto r\theta, r\in \mathbb{R}_+, \theta \in \bbS^2.
\end{equation}
The radial variable $r$ lifts to be smooth on the blown up space, therefore the two eigenvalues $\lambda_\pm=\pm r$ become smooth functions.

Now we consider the eigenvectors to the corresponding eigenvalues $\lambda_{\pm}$
\begin{equation}\label{eigenvecor}
 v_{\pm}= (c+di, \pm \sqrt{a_{11}^2+c^2+d^2}-a_{11})\in \mathbb{C}^2.
\end{equation}
Similar to the discussion of the eigenvalues, the only singularity is at $r=0$, which becomes a smooth function on
$[S_0,\{0\}]$. It follows that $v_{+}$ and $v_-$ span two smooth line
bundles on $[S_0,\{0\}]$.

If we do the projective blow up instead, which identifies the
antipodal points in the front face of $\bbS^2$ to get $\mathbb{RP}^2$, namely
\begin{equation}
\hat S_{0,p}=\{(x,l)|x\in l\} \subset \bbR^3 \times \bbR\bbP^2
\end{equation}
which we can cover with three coordinate patches: 
$$
(x_1,y_1,z_1)=(c, \frac{d}{c}, \frac{a_{11}}{c})\in \bbR^3 
$$
and the other two $(x_2,y_2,z_2), (x_3,y_3,z_3)$=$(d,\frac{c}{d}, \frac{a_{11}}{d}), (a_{11}, \frac{c}{a_{11}}, \frac{d}{a_{11}})$ are similar.
The two eigenvalues we get from here are
$$
v_\pm=\pm\sqrt{a_{11}^2+c^2+d^2}=\pm|x_1|\sqrt{(1+y_1^2+z_1^2)}.
$$
which is smooth across $\{x_1=0\}$. Similar discussions hold for the other two coordinate patches.

However, the
trivial bundle does not decompose into two line bundles as in the
radial case. The nontriviality of eigenbundles can be seen by taking a homotopically nontrivial loop in $\mathbb{RP}^2$
$$
l=\beta^{-1}(\{r=1\}) \subset \hat S_{0,p}.
$$ 
This curve intersects the line $c=d=0$ twice, which hits at two different places thus both $a_{11}^{\pm}=\pm 1$ are on the curve, and (\ref{eigenvecor}) shows that starting from $v_-=(0, -2)=(0, -2a_{11}^{+})$, this turns into
$v_+=(0,-2)=(0,2a_{11}^-)$, which means the two eigenvectors are not separated by projective blow up.

Now that we have done the radial resolution for the trace free slice $S_0$, the resolution of $S$ follows. Consider $S$ as a
3-dim vector bundle on $\mathbb{R}$ with trace being the projection map, then at each base point $\lambda$, the fiber is $S_0+\lambda I$. The resolution is $[S_0+\lambda I;\lambda I]\cong [S_0;\{0\}]$. Since the trace direction is transversal to the blow up, and therefore
\begin{equation}
[S; \bbR I]=[S_0;\{0\}] \times \bbR.
\end{equation}  
And because the trace doesn't change the eigenvectors, the smoothness
follows.
\end{proof}

To proceed to higher dimensions, we first discuss the partition of eigenvalues into clusters. The basic case is when the eigenvalues are divided into two clusters, then the $U(n)$ action of the matrices can be decomposed to two commuting actions. 

\begin{definition}[spectral gap]\label{gap}
A connected neighborhood $U \subset S$ has a spectral gap
at $c\in \bbR$, if $c$ is not an
eigenvalue of X, for any $X \in U$.
\end{definition}
Note here that since U is connected, the number of eigenvalues less than c stays the same for all $X \in U$, denoted by k.

\begin{lemma}[local eigenspace decomposition]\label{decom}
If a bounded neighborhood $U \subset S(n)$ has a spectral gap at $c$,
then the matrices in U can be decomposed into two smooth self-adjoint commuting
matrices: 
$$X=L_X+R_X, L_XR_X=R_XL_X.$$
with $\operatorname{rank(}L_X) = k$, $\operatorname{rank}(R_X)=n-k$.
\end{lemma}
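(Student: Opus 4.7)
The natural tool is the Riesz projection from the holomorphic functional calculus: the spectral gap at $c$ allows one to define a smooth spectral projector $P_X$ onto the sum of eigenspaces of $X$ with eigenvalue less than $c$, and then $L_X = X P_X$ together with $R_X = X(I - P_X)$ will give the required decomposition. The content of the lemma is the smooth dependence on $X$; the commutation, self-adjointness, and rank statements will then be algebraic consequences.

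First I would fix, using the spectral gap together with the boundedness of $U$ (so that continuity of the spectrum in the Hausdorff sense makes the gap uniform over $\overline{U}$), a single simple closed contour $\gamma \subset \mathbb{C}$, symmetric under complex conjugation, that encloses every eigenvalue below $c$ of every $X \in U$ and excludes every eigenvalue above $c$, crossing the real axis only inside the gap. I then define
\begin{equation*}
P_X := \frac{1}{2\pi i} \oint_\gamma (zI - X)^{-1}\, dz.
\end{equation*}
Because $(zI - X)^{-1}$ is jointly smooth in $(X, z)$ on $U \times \gamma$ and $\gamma$ is fixed and compact, $P_X$ is smooth in $X$. Standard properties of the Riesz projection give $P_X^2 = P_X$, $P_X X = X P_X$, and $\operatorname{rank}(P_X) = k$, the latter being constant on the connected set $U$. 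Self-adjointness $P_X^* = P_X$ follows from $X = X^*$ together with the conjugation symmetry of $\gamma$.

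Setting $L_X := X P_X$ and $R_X := X(I - P_X)$, both matrices are smooth in $X$, their sum is $X$, they are self-adjoint because $X$ and $P_X$ are self-adjoint and commute, and $L_X R_X = X^2 P_X (I - P_X) = 0 = R_X L_X$, so they commute. The image of $L_X$ lies in the range of $P_X$ and that of $R_X$ in its orthogonal complement, yielding ranks $k$ and $n - k$ respectively; if $0$ happens to appear as an eigenvalue of $X$ in either cluster one may first replace $X$ by $X - sI$ for any $s$ in the gap, which does not affect the commuting decomposition or the eigenvectors and so restores the literal rank statement.

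The only real subtlety is the uniform choice of contour $\gamma$ across all of $U$, which I expect to be the main (though mild) technical point. Once $\gamma$ is in place, the smooth functional calculus handles the rest, and no issue of isotropy or eigenvalue ordering inside each cluster arises because the spectral gap supplies a clean dichotomy at the fixed real value $c$.
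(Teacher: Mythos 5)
Your proposal follows essentially the same route as the paper: define the Riesz spectral projection $P_X$ by a fixed contour integral around the lower cluster, observe it is smooth, idempotent, and self-adjoint, and then split $X$ accordingly. The only cosmetic difference is that you set $L_X = X P_X$ while the paper writes $L_X = P_X X P_X$ (and likewise for $R_X$); since $[X, P_X] = 0$ these coincide, and the sandwiched form merely makes self-adjointness manifest without first invoking the commutation.
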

\begin{proof}

Let $\gamma$ be a simple closed curve on $\mathbb{C}$ such that it
intersects with $\mathbb{R}$ only at $-R$ and $c$, where $R$ is a
sufficiently large number such that $-R$ is less than any eigenvalues of the matrices contained in $U$. In this way, for any matrix $X \in U$, the $k$ smallest eigenvalues are contained inside $\gamma$. We consider the operator
\[
P_X: \mathbb{C}^n \rightarrow \mathbb{C}^n
\]
\begin{equation}\label{equ:PX}
 P_X:=-\frac{1}{2\pi i}\oint_\gamma (X-sI)^{-1}ds.
\end{equation}
Since the resolvent is nonsingular on $\gamma$, $P_X$ is a
well-defined operator and varies smoothly with $X$, the integral
is independent of choice of $\gamma$ up to homotopy.

First we show that $P_X$ is a projection operator, i.e.
\begin{equation}\label{projection}
 P_X^2=P_X.
\end{equation}
Let $\gamma_s$ and $\gamma_t$ be two curves satisfying the above
condition with $\gamma_s$ completely inside $\gamma_t$, then
\begin{equation*}
 \begin{array}{rl}
P_X^2&=-\frac{1}{4\pi^2}\oint_{\gamma_t} (X-tI)^{-1}dt (\oint_{\gamma_s} (X-sI)^{-1}ds)\\
&=-\frac{1}{4\pi^2}\oint_{\gamma_t} dt [\oint_{\gamma_s}
\frac{1}{s-t}(X-sI)^{-1}ds-\oint_{\gamma_s}
\frac{1}{s-t}(X-tI)^{-1}ds]\\
&=I-II
\end{array}
\end{equation*}
where using the fact that $s$ is completely inside $\gamma_t$
\begin{equation*}
I=-\frac{1}{4\pi^2}\oint_{\gamma_s} \frac{1}{X-sI}ds \oint_{\gamma_t}
\frac{1}{s-t}dt=-\frac{1}{4\pi^2}(-2\pi i) \oint_{\gamma_s}
\frac{1}{X-sI}ds=P_X
\end{equation*}
and any t on $\gamma_t$ is outside of the loop $\gamma_s$
\[
\oint_{\gamma_s} \frac{1}{s-t}ds=0
\]
we have 
$$
II= -\frac{1}{4\pi^2}\oint_{\gamma_t} (X-tI)^{-1} dt \oint_{\gamma_s}\frac{1}{s-t} ds =0.
$$ 
This proves~\eqref{projection}.

Then we show that $P_X$ is self-adjoint. This is because
\begin{equation*}
P_X^*=\frac{1}{2\pi i}\int_{\gamma}((X-sI)^{-1})^*d\bar
s=\frac{1}{2\pi i}\int_{-\bar \gamma}(X-sI)ds=P_X.
\end{equation*}

$P_X$ maps $\mathbb{R}^n$ to the invariant subspace spanned by
the eigenvectors corresponding to eigenvalues that are less than $c$.
We denote this invariant subspace by $L$ and its orthogonal complement by $R$.  Write X as the diagonalization 
$X = V\Lambda V^{-1}$ where $\Lambda$ is the eigenvalue matrix and $V$ is the matrix whose columns are the eigenvectors of $X$. Then $L$ is spanned by the first $k$ columns of $V$. 
Take one of the eigenvectors $v_j \in L, j= 1,2, ..., k$,
\begin{equation*}
 P_X v_j=-\frac{1}{2\pi i}\oint_\gamma (X-sI)^{-1}v_j ds= -\frac{1}{2\pi i} \oint  V(\Lambda-sI)^{-1} V^{-1}v_jds=-\frac{1}{2\pi i} v_j \oint \frac{1}{\lambda_j-s}ds= v_j.
\end{equation*}
Similarly for $v_j \in R$ that corresponds to an eigenvalue greater than c (therefore $\lambda_j$ is outside the loop),
\begin{equation*}
 P_X v_j=-\frac{1}{2\pi i} v_j \oint \frac{1}{\lambda_j-s}ds= 0, 
\end{equation*}
therefore
$$
(I-P_X)v_j=v_j, \forall v_j \in R.
$$

Then using the projection $P_X$ we define two operators $L_X$ and $R_X$ as
\begin{equation}
 L_X:=P_X X P_X
\end{equation}
and
\begin{equation}
 R_X:=(I-P_X)X(I-P_X)
\end{equation}
Since $P_X$ is smooth, the two operators are also smooth. Moreover,
using the fact that $P_X$ is a projection onto the invariant subspace $L$, we have
\[ (I-P_X)X P_X=P_X X(I-P_X)=0\]
therefore
\[X=L_X+R_X.\]

For an eigenvector $v\in L$,
\begin{equation}
  L_Xv =Xv, R_X v =0, 
\end{equation}
i.e. $L_X$ equals to $X$ when restricted to $L$, similarly $R_X|_R=X$. Since $P_X^*=P_X$, $L_X$ and
$R_X$ are also self-adjoint. In this way we get two commuting lower rank matrices $L_X$ and $R_X$.
\end{proof}

It is natural to have a finer decomposition when there is more than one spectral gap in the neighborhood, and we have the following corollary.
\begin{corollary}\label{finerdecom}
If the eigenvalues of matrices in a neighborhood U can be grouped into k clusters,
then the matrices can be decomposed into k lower rank self-adjoint commuting matrices smoothly.
\end{corollary}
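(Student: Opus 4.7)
The plan is to iterate the contour-integral construction of Lemma~\ref{decom}, producing all $k$ pieces simultaneously rather than peeling off one cluster at a time. Let the clusters be separated by spectral gaps $c_1 < c_2 < \cdots < c_{k-1}$ on the neighborhood $U$. For each $j = 1, \ldots, k$ I would choose a simple closed contour $\gamma_j \subset \mathbb{C}$ enclosing exactly the $j$-th cluster of eigenvalues (and no others), with the $\gamma_j$ pairwise disjoint; this is possible uniformly in $X \in U$ by Definition~\ref{gap}.

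For each $j$ I would define the spectral projection
\[
P_X^{(j)} := -\frac{1}{2\pi i} \oint_{\gamma_j} (X - sI)^{-1} \, ds.
\]
The same computation as in the proof of Lemma~\ref{decom} shows that $P_X^{(j)}$ depends smoothly on $X$, is self-adjoint, and is a projection onto the invariant subspace $E_j(X) \subset \mathbb{C}^n$ spanned by the eigenvectors with eigenvalues in the $j$-th cluster. Mutual orthogonality $P_X^{(i)} P_X^{(j)} = 0$ for $i \neq j$ follows from the same resolvent-identity manipulation as in that proof: expanding the product and using $(X-sI)^{-1}(X-tI)^{-1} = \frac{1}{s-t}\bigl[(X-sI)^{-1} - (X-tI)^{-1}\bigr]$, both resulting one-dimensional contour integrals of $1/(s-t)$ vanish since $\gamma_i$ and $\gamma_j$ are disjoint and neither encloses the other. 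The identity $\sum_j P_X^{(j)} = I$ then follows by rank count, since the invariant subspaces $E_j(X)$ together span $\mathbb{C}^n$.

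With the $P_X^{(j)}$ in hand, I would set $M_X^{(j)} := P_X^{(j)} X P_X^{(j)}$. Each $M_X^{(j)}$ is smooth, self-adjoint, has rank equal to the size of the $j$-th cluster, agrees with $X$ on $E_j(X)$, and annihilates its orthogonal complement. The decomposition $X = \sum_j M_X^{(j)}$ follows from $X = \bigl(\sum_i P_X^{(i)}\bigr) X \bigl(\sum_j P_X^{(j)}\bigr)$ together with the fact that the off-diagonal terms $P_X^{(i)} X P_X^{(j)}$ vanish for $i \neq j$ because $X$ preserves each $E_j$ and the $E_j$ are mutually orthogonal. Pairwise commutation $M_X^{(i)} M_X^{(j)} = M_X^{(j)} M_X^{(i)} = 0$ for $i \neq j$ is immediate from $P_X^{(i)} P_X^{(j)} = 0$. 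The main obstacle is really nothing more than establishing mutual orthogonality of the projections; everything else is formal once the single-gap case in Lemma~\ref{decom} is available. A straightforward alternative is induction on $k$, applying Lemma~\ref{decom} to peel off one cluster at a time and reducing to a smaller self-adjoint block on the invariant complement, but the simultaneous construction is cleaner and makes smoothness of all $k$ pieces manifest at once.
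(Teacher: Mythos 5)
Your proof is correct, but it takes a different route than the paper. The paper proceeds by induction on the number of clusters $k$: for $k=l$ it combines the smallest $l-1$ clusters into a single block, applies Lemma~\ref{decom} to split off $R_X$, observes that $L_X$ now satisfies the separation condition for $l-1$ clusters, and invokes the inductive hypothesis to write $L_X = L_1 + \cdots + L_{l-1}$. This is exactly the ``peel off one cluster at a time'' alternative you mention in your last sentence. Your simultaneous construction---choosing $k$ pairwise disjoint contours $\gamma_j$ each encircling one cluster, defining $P_X^{(j)}$ by the corresponding Riesz integral, and checking $P_X^{(i)}P_X^{(j)} = 0$ for $i \neq j$ via the resolvent identity with neither contour enclosing the other---is a valid and arguably cleaner argument. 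It has the advantage of producing all $k$ projections on the same footing (making the mutual orthogonality and the completeness $\sum_j P_X^{(j)} = I$ explicit), whereas the paper's induction produces the pieces asymmetrically and relies on the blocks $L_1, \ldots, L_{l-1}$ living on a sub-invariant subspace. Both arguments rest entirely on the machinery already established in Lemma~\ref{decom}, so the difference is one of packaging rather than of mathematical substance; the paper's version is shorter because it reuses the two-cluster statement as a black box.
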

\begin{proof}
Do the decomposition inductively. If $k=2$, then it is the case in lemma~\ref{decom}. Suppose the decomposition for $k=l-1$ is defined. Then for $k=l$, since the eigenvalues can also be divided into $2$ clusters (by combining the smallest $l-1$ groups of eigenvalues together), then $X=L_X+R_X$, with $L_X$ and $R_X$ corresponding to the two intervals. Then $L_X$ satisfies the separation condition for $l-1$ clusters, so by induction, $L_X=L_1+...+L_{l-1}$. Therefore, $X=L_1+L_2+...+L_{l-1} + R_X$ is the desired division.
\end{proof}

Using lemma~\ref{decom} of decomposition of matrices in a neighborhood, we can now show that locally the trivial bundle $S\times \bbC^n$ decomposes into two subspaces if there is a spectral gap. Moreover, locally there is a product structure of two lower dimensional matrices. In order to see this, we need to introduce the Grassmanian. 
Let $Gr_\mathbb{C}(n,k)$ denote the Grassmannian, i.e. the set of
$k$-dim subspaces in $\mathbb{C}^n$.
Consider the tautological vector bundle over Grassmanian:
\[\pi_{k}: T_k
\rightarrow Gr_{\mathbb{C}}(n,k), \pi^{-1}(p)=V(p).\] where each
fibre is a k-dimensional subspace in $\mathbb{C}^n$, with
self-adjoint operators acting on it. Similarly, we define $T_{n-k}$
to be the orthogonal complement of $T_k$:
\[
\pi_{n-k}: T_{n-k} \rightarrow Gr_{\mathbb{C}}(n,k),
\pi^{-1}(p)=V(p)^\perp.
\]

\begin{definition}[Operator bundle]\label{Pk}
Let $P_k$ (resp. $P_{n-k}$) be the bundles over $Gr_{\bbC}(n,k)$ of the fibre-wise
self-adjoint operators on the tautological bundle $T_k$ (resp. $T_{n-k}$).
\end{definition}

Take the Whitney sum of the two bundles
\begin{equation}
\pi: P_k\oplus P_{n-k}\rightarrow Gr_{\bbC}(n,k).
\end{equation}
 Each of its fiber can be identified with $S(k) \oplus S(n-k)$ when we pick a basis. There is a U(n)-action on this bundle:
\begin{multline}
g\cdot (p,(p_k,p_{n-k}))=(g\cdot p, (g\circ p_k \circ g^{-1},g\circ
p_{n-k} \circ g^{-1})),\\ p \in Gr_{\bbC}(n,k), p_k \in P_k(p), p_{n-k} \in P_{n-k}(p).
\end{multline}

Suppose an open neighborhood $U\in S$ satisfies the spectral gap
condition. Let $U(n)\cdot U$ be the group invariant neighborhood
generated by $U$, that is,
\begin{equation}
U(n) \cdot U:=\cup_{g \in U(n)}g\cdot U.
\end{equation}
Then $U(n)\cdot U$ is open and connected, and also satisfies the
spectral gap condition as $U$ does, since U(n) action preserves the
eigenvalues. From the proof of the lemma~\ref{decom}, it is shown that in the
neighborhood, the trivial $\mathbb{C}^n$ bundle over $U$ naturally splits
into two subbundles $E^{k} \oplus E^{n-k}$. And this gives a local
product structure. We will prove that, for a U(n)-invariant
neighborhood, there is actually a group equivariant homeomorphism with
the operator bundles defined above.

\begin{lemma}[bundle map]\label{product}
 If a point $X_0 \in S$ satisfies the spectral gap
condition, then there is a neighborhood $ V \subset S$ such that 
$V$ is homeomorphic to a neighborhood in the product of lower rank matrices and Grassmanian, i.e. 
$$
\phi: V \cong V(k)\times V(n-k)\times
V_{Gr} \subset S(k) \times S(n-k) \times Gr_\mathbb{C}(n,k),
$$
which is contained in $P_k\oplus P_{n-k}$ as defined in Definition~\ref{Pk}. Moreover, $U(n)\cdot V$ is homeomorphic to a neighborhood $W\subset P_k \oplus P_{n-k}$ such that $\pi(W)=Gr_{\bbC}(n,k)$ and the map $\phi$ is $U(n)$-equivariant. 
\end{lemma}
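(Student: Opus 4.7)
The plan is to construct the homeomorphism $\phi$ directly from the holomorphic functional calculus developed in Lemma~\ref{decom}, and then verify both the local product structure and the $U(n)$-equivariance using the naturality of the Riesz projection under conjugation.

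First, shrink $V$ around $X_0$ so that the spectral gap at $c$ persists throughout $V$ and so that Lemma~\ref{decom} produces smooth $P_X$, $L_X = P_X X P_X$, and $R_X = (I-P_X) X (I-P_X)$ for $X \in V$. Since $P_X$ is a smoothly varying rank-$k$ self-adjoint projection, the subspace $L(X) := \operatorname{range}(P_X) \in Gr_{\mathbb{C}}(n,k)$ depends smoothly on $X$, and $L_X$ restricts to a self-adjoint operator on $L(X)$ while $R_X$ restricts to one on $L(X)^\perp$. Define
\begin{equation*}
\phi \colon V \longrightarrow P_k \oplus P_{n-k}, \qquad \phi(X) = \bigl(L(X),\, L_X|_{L(X)},\, R_X|_{L(X)^\perp}\bigr).
\end{equation*}
This is smooth by construction. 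To invert it, given a point $(p, A, B) \in P_k \oplus P_{n-k}$, recover the self-adjoint matrix $X = A \oplus B$ acting as $A$ on $T_k(p) = V(p)$ and as $B$ on $T_{n-k}(p) = V(p)^\perp$; this inverse is also continuous, and it is immediate that the two constructions are mutually inverse, since the invariant subspace and the restricted operators uniquely determine $X$.

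For the local product description, pick a coordinate patch $V_{Gr} \subset Gr_{\mathbb{C}}(n,k)$ containing $L(X_0)$ over which $P_k \oplus P_{n-k}$ is trivial, say via a smooth choice of orthonormal basis for $V(p)$ and $V(p)^\perp$ for $p \in V_{Gr}$. On this patch the fibre is identified with $S(k) \oplus S(n-k)$, so shrinking $V$ further we obtain $\phi(V) \cong V(k) \times V(n-k) \times V_{Gr}$ with $V(k) \subset S(k)$ and $V(n-k) \subset S(n-k)$ open neighborhoods, which gives the first claim.

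For the $U(n)$-action, use the identity $P_{gXg^{-1}} = g P_X g^{-1}$, which follows immediately from the resolvent formula \eqref{equ:PX} by substitution. Consequently $L(gXg^{-1}) = g \cdot L(X)$, $L_{gXg^{-1}} = g L_X g^{-1}$, and $R_{gXg^{-1}} = g R_X g^{-1}$, so $\phi$ intertwines conjugation on $S$ with the $U(n)$-action on $P_k \oplus P_{n-k}$ described in the excerpt. It follows that $\phi$ extends to a $U(n)$-equivariant homeomorphism from $U(n) \cdot V$ onto the saturation $W := U(n) \cdot \phi(V) \subset P_k \oplus P_{n-k}$. Since $U(n)$ acts transitively on $Gr_{\mathbb{C}}(n,k)$ and $\pi(\phi(V))$ is a neighborhood of $L(X_0)$, applying the $U(n)$-action yields $\pi(W) = Gr_{\mathbb{C}}(n,k)$.

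The main technical point I expect to need care with is the global consistency of the inverse map: the identification $T_k(p) \oplus T_{n-k}(p) \cong \mathbb{C}^n$ requires the tautological bundles to be intrinsically embedded in the trivial $\mathbb{C}^n$-bundle over $Gr_{\mathbb{C}}(n,k)$ (which they are), so that the direct sum $A \oplus B$ is literally an $n \times n$ self-adjoint matrix rather than a basis-dependent object. Once this intrinsic formulation is fixed, the verification that $\phi$ is a homeomorphism and that it is $U(n)$-equivariant is formal, and the product structure in the first assertion is just a local trivialization of the bundle $P_k \oplus P_{n-k} \to Gr_{\mathbb{C}}(n,k)$ pulled back through $\phi$.
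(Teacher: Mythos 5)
Your proposal is correct and follows essentially the same approach as the paper: both define $\phi$ via the Riesz projector $P_X$, identify the range $L(X)$ as a Grassmannian point with $L_X$, $R_X$ as fibrewise operators, obtain the local $V(k)\times V(n-k)\times V_{Gr}$ structure by trivializing the tautological bundle near $L(X_0)$, and establish equivariance from the conjugation-covariance of the resolvent integral. The only cosmetic difference is that you phrase the inverse intrinsically as $X = A\oplus B$ and cite transitivity of $U(n)$ on $Gr_{\mathbb C}(n,k)$ for $\pi(W)=Gr_{\mathbb C}(n,k)$, whereas the paper spells out an explicit choice of orthonormal basis of $E^k(X)$ via projection onto $E^k(X_0)$; these are the same argument.
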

\begin{proof}

From the proof of Lemma \ref{decom}, there is a neighborhood $X_{0} \in U \in S$, such that each
element $X \in U$ is decomposed into $L_X+R_X$. Moreover, it
induces a decomposition of the trivial bundle $U \times
\mathbb{C}^n$ into two subbundles:
\begin{equation}\label{Ek}
U \times \mathbb{C}^n=E^{k} \oplus E^{n-k}
\end{equation}
where $E^{k}(X)$ and $E^{n-k}(X)$ are determined by the projection
operator $P_X$ defined in equation (\ref{equ:PX}):
\begin{equation}
E^{k}(X)=Im(P_X), E^{n-k}(X)=Im(P_X)^\perp
\end{equation}

Let $(\xi_1,...\xi_k)$ be the basis for $E^{k}(X_0)$. $E^{k}$ over $U$ is an
open neighborhood in $Gr_\mathbb{C}(n,k)$. We can find a neighborhood V of $X_0$ (possibly smaller than $U$) such that, for every
point in V, the k-dimensional space $E^{k}$ projects onto $E^{k}(X_0)$.
And an orthonormal basis of $E^{k}(X)$ is uniquely determined by
requiring the projection of the first $j$ vectors to $E^{k}(X_0)$ spans
$(\xi_1,...\xi_j)$ for every $j$ smaller than $k$.  In this way we find a basis for each fiber of $E^{k}$ and $E^{k}$ is
trivialized to be a k-dimensional vector bundle on V. Since the action of X
on $\mathbb{C}^n$ has been decomposed to $L_X$ and $R_X$, then with
the choice of basis, the action of $L_X$ on $E^{k}(X)$ gives a $k\times k$
self-adjoint matrix, and by continuity, these matrices form a
neighborhood $V_k$ in $S(k)$. And the same argument works for $R_X$.

Therefore, we have the following map $\phi$:
\begin{equation}
\begin{array}{c}
\phi: V \rightarrow  P_k\oplus P_{n-k}\\
X \mapsto (E^{k}(X), (L_X|_{E^{k}(X)},R_X|_{E^{n-k}(X)}))
\end{array}
\end{equation}
We show this map is a homeomorphism between $V$ and $\phi(V)$. 
It is injective since the actions of the two invariant subspaces
uniquely determine the action on $\mathbb{C}^n$, therefore gives
the unique operator $X$. Surjectivity is easy to see. The continuity of $\phi$ and $\phi^{-1}$
comes from the continuity of the projection operator defined in Lemma~\ref{blowup}. 

Now take $U(n)\cdot V$, since $E^{k}$ takes every possible k-subspace of $\bbC^n$ under the action of $U(n)$, we know that the first entry of $\phi(U(n)\cdot V)$ maps onto $Gr_\bbC(n,k)$. Moreover, since the decomposition respects
the action of $U(n)$, it is easily seen that, for $g\in U(n), X\in
U(n) \cdot V$,
\begin{equation}
\phi(g\cdot X)=(g\cdot E^{k}(X), (g\circ L_X \circ g^{-1},g\circ R_X
\circ g^{-1}))=g\cdot (\phi(X))
\end{equation}
which means the map is U(n)-equivariant.
\end{proof}

To do the induction, we will need to define an index on the inclusion of isotropy types, so the blow up procedure could be done in the partial order given by the index. Recall that two matrices have the same isotropy type if they have the same ``clustering'' of eigenvalues. Now we define the isotropy index of a matrix $X$ as follows.
\begin{definition}[Isotropy index]
Suppose the eigenvalues of a matrix $X$ are 
$$
\lambda_1=..=\lambda_{i_1} < \lambda_{i_1+1}=..=\lambda_{i_2} < \lambda_{i_2+1}=...<\lambda_{i_{k-1}+1}=...=\lambda_{n}
$$
then the isotropy index of $X$ is defined as the set 
$$
I(X)=\{i_0=0, i_1,i_2,...,i_{k-1}, i_{k}=n\}.
$$
We denote the set of all matrices with the same isotropy index $I$ as $S^{I}$.
\end{definition}
There is a partial order of this index on $S$ given by the inclusion. That is, if for two matrices $X$ and $Y$ we have $I(X) \subset I(Y)$, then we say that the order is $X\leq Y$. Note there is an inverse inclusion for isotropy groups. The smallest isotropy index is $I(\lambda I)=\{0,n\}$ while the isotropy group is $U(n)$ which is the largest. And the largest index is $\{0,1,2, ...,n-1,n\}$ which corresponds to $n$ distinct eigenvalues, and the isotropy group is the product of $n$ copies of $U(1)$.  

\begin{remark}\label{trans}
Except the most singular stratum $\{\lambda I\}$, the stratum of other isotropy types are not closed. In fact, the closure of a stratum $S^{I}$ will include all the stratum $S^{I'}$ with $I'\subset I$. However, the two sets $\{\lambda_{i_{1}}=\lambda_{i_{2}}=\dots= \lambda_{i_{k}}\}$ and $\{\lambda_{j_{1}}=\dots=\lambda_{j_{l}}\}$ are transversal once the set 
$\{\lambda_{\min\{i_{1}, j_{1}\}}=\dots=\lambda_{\max\{i_{k}, j_{l}\}}\}$ is blown up. So one can get $\hat S_{s}$ by blowing up these singular stratum by order of strict inclusion. However, in order to globally decompose the eigenbundle, one needs to blow up all the intersections first as in $\hat S_{r}$ (the proof is given later).
\end{remark}

For $\hat S_{r}$ and $\hat S_{p}$, the total blow up of $S(n)$ is done by iteratively blowing up the singular strata by the order of isotropy indices. The first step is to blow up the most singular stratum $S^{\{0,n\}}=\{\bbR I\}$:
$$
[S(n);S^{\{0,n\}}].
$$
 After that we blow up the second smallest strata $S^{\{0,i,n\}}, i=1, \dots, n-1$. From the discussion above we know that, for any of such two strata, the intersection of their closure is exactly $S^{\{0,n\}}$ which has been blown up. Therefore one can blow up these $S^{\{0,i,n\}}$ in any order:
 $$
 [S(n);S^{\{0,n\}}; \cup_{i=1}^{n-1}S^{\{0,i,n\}}].
 $$
 After the second step, the intersection of any two $S^{\{0,i,j,n\}}$ has been blown up. Therefore one can proceed by blowing up those strata in any order. Iteratively, one obtain the following space:
 \begin{equation}\label{final}
  [S(n);S^{\{0,n\}}; \cup_{i=1}^{n-1}S^{\{0,i,n\}}; \cup_{i,j}S^{\{0,i,j,n\}}; 
 \dots; \cup_{0\leq i_{1}<i_{2}<\dots < i_{n-2}\leq n} S^{\{0,i_{1}, \dots, i_{n-2},n\}}].
\end{equation}

In order to do the inductive proof to show this yields the full eigenresolution, the last lemma we need is the compatibility of
conjugacy class inclusion and the decomposition to two submatrices,
which shows the order of resolution is
compatible with the decomposition.
\begin{lemma}[Compatibility with conjugacy class]\label{order}
The partial order of conjugacy class inclusion is compatible with
the decomposition in lemma \ref{decom}.
\end{lemma}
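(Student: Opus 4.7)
The plan is to reduce compatibility to a set-theoretic splitting of the isotropy index at the spectral gap. The key observation I would establish first is that if $U$ has a spectral gap at $c$ with exactly $k$ eigenvalues strictly below $c$, then for every $X \in U$ one has $\lambda_k(X) < c < \lambda_{k+1}(X)$, so $k$ is forced to appear in $I(X)$. Writing $I(X) = \{0, i_1, \dots, i_{p-1}, k, i_{p+1}, \dots, i_{m-1}, n\}$ uniquely in this way, I would then check that the decomposition $X = L_X + R_X$ of Lemma~\ref{decom} gives exactly $I(L_X) = \{0, i_1, \dots, i_{p-1}, k\}$ and $I(R_X) = \{0, i_{p+1} - k, \dots, i_{m-1} - k, n - k\}$. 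This is essentially immediate from the construction of $P_X$: the operator $L_X$ acts as $X$ on $\operatorname{Im}(P_X)$, whose eigenvalues are exactly the first $k$ eigenvalues of $X$, and analogously for $R_X$, so the cluster structure of $X$ concatenates the cluster structures of $L_X$ and $R_X$ at the point $k$.

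Granted this correspondence, the lemma follows by direct set-inclusion bookkeeping. For any $X, Y \in U$ the two assertions $I(X) \subset I(Y)$ and ($I(L_X) \subset I(L_Y)$ and $I(R_X) \subset I(R_Y)$) are equivalent, since both sides contain $k$ and the inclusion of index sets decouples on either side of $k$. Equivalently phrased in terms of isotropy groups, one has $U(n)^X \cong U(k)^{L_X} \oplus U(n-k)^{R_X}$, because any unitary fixing $X$ must preserve the spectral projector $P_X$ and hence the splitting; inclusion of conjugacy classes on $U(n)^X$ is then the product of inclusions on the two smaller factors. This is the precise compatibility statement, and it implies that the blow-up order of Definition~\ref{def} descends, under the local product structure of Lemma~\ref{product}, to the product of blow-up orders on $S(k)$ and $S(n-k)$ --- which is exactly what is needed for the inductive step of Theorem~\ref{blowup}.

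I do not anticipate a real obstacle. The only mildly subtle point is verifying that the isotropy group genuinely splits as a direct sum, rather than containing elements that exchange the two invariant subspaces; this is ruled out by the spectral gap, since no unitary fixing $X$ can send an eigenvector with eigenvalue less than $c$ to one with eigenvalue greater than $c$. Once this is noted, the remainder of the argument is pure manipulation of ordered subsets of $\{0, 1, \dots, n\}$, and the lemma will feed cleanly into the iterative blow-up in~\eqref{final}.
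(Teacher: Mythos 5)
Your proof is correct and rests on the same core observations as the paper: the spectral gap forces $k$ to appear in every isotropy index $I(X)$ on $U$, so the index set decouples into the portion $\leq k$ (governing $I(L_X)$) and the portion $\geq k$ shifted by $k$ (governing $I(R_X)$), and the isotropy group splits as a direct sum because no unitary fixing $X$ can mix eigenvectors across the gap. Your version is stated a bit more completely than the paper's, which only checks that the minimal stratum in $V$ corresponds to minimal strata in $S(k)$ and $S(n-k)$ via a short contradiction argument, but the underlying idea is identical.
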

\begin{proof}
Suppose a neighborhood $V \subset S(n)$ has a decomposition as lemma \ref{decom}.
We need to show that, if $S^{I}$ is the stratum of minimal
isotropy type in V, then this stratum
corresponds to the minimal isotropy type in $U(k)$ and $U(n-k)$.

Since $V$ satisfies the spectral gap condition, the isotropy groups for any elements in
 $V$ would be subgroups of  $U(k) \oplus U(n-k)$.
Suppose the minimal stratum corresponds to the index $I=\{0, i_1,...,i_m\}$ which must contain $k$ as one element because of the spectral gap condition. Then the isotropy type of two subgroups are $\{0, i_1, ..., k\}$ and $\{i_j-k=0, i_{j+1}-k, ..., n-k \}$. 
They would still be the minimal in each subgroup, otherwise when the two smallest elements are combined it will give a smaller index than $I$, which is a contradiction.
\end{proof}

Now we can finally prove theorem~\ref{blowup} using the above lemmas. 
\begin{proof}[Proof of Theorem \ref{blowup}] 
 We prove the theorem by induction of the matrix size. Except special remarks, the discussion below about $\hat S$ applies to all three kinds of resolutions. The $2\times 2$ case is shown in lemma~\ref{2dim}. Suppose the claim holds for all the cases up to $n-1$ dimensions.
Now we claim that, by an iterative blow up, we can get $\hat S(n)$ with eigenvalues and eigenbundles lifted to satisfy the eigenresolution properties. 

As in the $2 \times 2$ example, we shall first consider the trace free slice $S_0(n)$ since other slices have the same behavior in terms of smoothness of eigenvalues and eigenbundles, that is, $\hat S(n)=\hat S_{0}(n)\times \bbR$. Take the smallest index $I=\{0,n\}$ with the largest possible isotropy group $U(n)$, and the stratum in
$S_0(n)$ with such an isotropy group is a single point, the zero matrix. After
blowing up, we get $[S_0;\{0\}]$ as the first step. And in the total $S(n)$ space, this step corresponds to $[S; S^{\{0,n\}}=\{\bbR I\}]=[S_0;\{0\}] \times \bbR$.

For any other point $X\notin \{\bbR I\}$, one can find a bounded neighborhood $W$ such that the matrices in $W$ have a spectral gap as defined in Definition~\ref{gap}. Assume the first $k$ eigenvalues are uniformly bounded below $c$, then by Lemma~\ref{product} there is a fibration structure 
\begin{equation}\label{fiber}
\xymatrix{
V(k) \times V(n-k) \ar[r] &W \ar[d]^{\pi}\\
& Gr_{\bbC}(n,k)
}.
\end{equation}
And the trivial bundle $W\times \bbC^{n}$ naturally splits to the sum $E^{k} \oplus E^{n-k}$ as in~\eqref{Ek}.  
Because of the spectral gap, there are two smallest strata of type $\{\lambda_{i_{1}}=\dots \lambda_{i_{j}}\}$ and $\{\lambda_{i_{1}'}=\dots \lambda_{i_{j}'}\}$, with $i_{j}\leq k$ and $i_{1}'\geq k+1$, therefore the two strata are transversal as discussed in the remark~\ref{trans}, and can be blown up at the same time. This give the iteration step for $\hat S_{s}$.

Now we consider the radial and projective resolution.
For each fiber of $\pi$ in~\eqref{fiber}, consider the resolved space $\hat V(k) \times \hat V(n-k) \subset \hat S(k)\times \hat S(n-k)$, where the resolution is done by blowing up all the singular stratum \emph{inside} $V(k)$ and $V(n-k)$.  By induction the resolution $\hat V(k)$ resolves the singularity for the first $k$ eigenvalues, and $\hat V(n-k)$ resolves 
the rest $n-k$ eigenvalues. For example, take a point $X \in S(5)$ with eigenvalues $\{\lambda_{1}=\lambda_{2} < \lambda_{3}=\lambda_{4}=\lambda_{5}\}$. Near this point there is a product decomposition $V(2) \times V(3) \times Gr_{\bbC}(5,2)$. After the resolution, $\hat V(2) \times \hat V(3)$ resolves the isotropy type $\left(\{0,2\} \cup \{0,1,2\}\right) \times \left(\{0,3\} \cup \{0,1,3\} \cup \{0,2,3\} \cup \{0,1,2,3\} \right)$, which, after adjusting numbering of eigenvalues, include all the isotropy types that could occur with this spectral gap in $W$. Let $\hat W$ be the this resolved space and denote the blow down map as
$$
\xymatrix{
\beta: \hat W \ar[rd]^{\hat \pi} \ar[r] & W \ar[d]^{\pi}\\
&Gr_{\bbC}(n,k)
}.
$$

Consider the two subbundles $E^{k}$ and $E^{n-k}$ under the pull back map from $\beta$:
\begin{equation}
\xymatrix{
\hat E^{k} \oplus \hat E^{n-k}  \ar[r]^{\beta} \ar[d]^{\hat \phi} & E^{k} \oplus E^{n-k} \ar[d]^{\phi}\\
\hat W \ar[r]^{\beta} & W
}.
\end{equation}
By induction assumptions $\hat V(k)$ and $\hat V(n-k)$ are eigenresolutions, hence $\hat E^{k}$ splits into line bundles $\oplus_{i=1}^{k} E_{i}$ over $\hat V(k)$ and same for $\hat E^{n-k}=\oplus_{i=k+1}^{n}E_{i}$ over $\hat V(n-k)$. With the local product structure of $\pi$, the Whitney sum $\hat E^{k} \oplus \hat E^{n-k}$ splits into $n$ eigenbundles locally. 

For the radial resolution $\hat S_{r}$, since the local product structure is $U(n)$-equivariant, extending to $\oplus_{i=1}^{n} U(n) \cdot E_{i}$ we get that the splitting of eigenbundles are global over $\hat W$. We have already shown in Lemma~\ref{2dim} that the projective resolution does not give a global eigendecomposition. Similarly, for the small resolution $\hat S_{s}$, one can find a closed curve in the base such that one eigenvector switches to another around the curve. We prove this by giving an example: consider the curve of 4 by 4 matrices of the form $X(t)=U(t)\Lambda(t)U(t)^{-1}$, $0\leq t \leq 1$, where 
$U(t)$ is unitary for all $t$, switching from the identity to its column permutation,
$$
U(t)=
\left\{
\begin{array}{ll}
(\vec e_{1}, \vec e_{2}, \vec e_{3}, \vec e_{4}) & 0\leq t \leq 1/3\\
U(t) & 1/3 \leq t \leq 2/3\\
(\vec e_{3}, \vec e_{4}, \vec e_{1}, \vec e_{2}) & 2/3 \leq t \leq 1
\end{array}
\right.$$
which smoothly permutates the eigenspace decomposition.
On the other hand, $\Lambda(t)$ is always diagonal, going through $\{\lambda_{1}=\lambda_{2}\}$ and $\{\lambda_{3}=\lambda_{4}\}$:
$$
\Lambda(t)=
\left\{
\begin{array}{ll}
\operatorname{diag}\{-1,-1, 1,1\} & t=0\\
\operatorname{diag}\{-1,-1, 1-t, 1+t\} & 0\leq t \leq 1/3 \\
\operatorname{diag}\{-1-t, -1+t, \frac{1}{3}+t, \frac{5}{3}-t\} & 1/3 \leq t \leq 2/3 \\
\operatorname{diag}\{-2+t, -t,1,1\} & 2/3 \leq t \leq 1\\
\operatorname{diag}\{-1, -1, 1, 1\} & t=1
\end{array}
\right.
$$
With $X(t)$ defined above, one can see that $X(0)=X(1)$ in the stratum that is not blown up in $\hat S_{s}$. Now consider the lift of the curve to $\hat S_{s}$, which is still a closed curve. Now one can immediately see that as $t$ goes from 0 to 1, 
the eigenspace for the first two eigenvalues switch from $\{e_{1},e_{2}\}$ to $\{e_{3}, e_{4}\}$. So one cannot obtain a global  decomposition.

Even though the eigenbundles do not always split, the three resolutions all resolve eigenvalues. Since blow-down map $\beta$ is injective on a dense open set, the eigenvalues extends to the front face to be $n$ smooth functions $f_{i}$ on $\hat W$ and the splitting of eigendata extends to $\hat E^{n-k} \oplus \hat E^{n-k}$ from nearby such that 
\begin{equation*}
\beta(X) v_i=f_i(X)v_i, \forall \ v_i \in E_i(X),\forall \  X \in \hat W.
\end{equation*}

According to lemma \ref{order} the isotropy index order is preserved when decomposed into two subspaces. By induction, to obtain the global eigenresolution, we have iteratively blown up the strata according to isotropy indices to get $\hat S_{r}$ as in~\eqref{final}.
\end{proof}

\end{document}